\newtheorem{theorem}{Theorem}
\newtheorem{proposition}[theorem]{Proposition}
\theoremstyle{definition}
\newtheorem{example}{Example}
\newtheorem{remark}{Remark}
\numberwithin{equation}{section} 
\begin{document}
\title[On Congruence Theorem for valued division algebras]{On Congruence Theorem for valued division algebras}

\author[Huynh Viet Khanh]{Huynh Viet Khanh$^\dagger$}
\address{$^\dagger$Department of Mathematics and Informatics, HCMC University of Education, 280 An Duong Vuong Str., Dist. 5, Ho Chi Minh City, Vietnam}
\email{khanhhv@hcmue.edu.vn}

\author[Nguyen Duc Anh Khoa]{Nguyen Duc Anh Khoa$^{\dagger\ddagger}$}
\address{$^\ddagger$Department of Mathematics, Le Hong Phong High School for the Gifted, 235 Nguyen Van Cu Str., Dist. 5, Ho Chi Minh City, Vietnam}
\email{anhkhoa27092002@gmail.com; khoanda.dais035@pg.hcmue.edu.vn}
\thanks{This research is funded by Ho Chi Minh City University of Education Foundation for Science and Technology under grant number CS.2023.19.55.}
\keywords{division ring; graded division ring; valuation theory; reduced K-theory \\
	\protect \indent 2020 {\it Mathematics Subject Classification.} 16W60; 19B99; 16K20}
	
\begin{abstract} 
	Let $K$ be a field equipped with a Henselian valuation, and let $D$ be a tame central division algebra over the field $K$. Denote by $\mathrm{TK}_1(D)$ the torsion subgroup of the Whitehead group ${\rm K}_1(D) = D^*/D'$, where $D^*$ is the multiplicative group of $D$ and $D'$ is its derived subgroup. Let ${\bf G}$ be the subgroup of $D^*$ such that $\mathrm{TK}_1(D) = {\bf G}/D'$. In this note, we prove that either $(1 + M_D) \cap {\bf G} \subseteq D'$, or the residue field $\overline{K}$ has characteristic $p > 0$ and the group ${\bf H} := ((1 + M_D) \cap {\bf G})D'/D'$ is a $p$-group. Additionally, we provide examples of valued division algebras with non-trivial ${\bf H}$. This illustrates that, in contrast to the reduced Whitehead group \({\rm SK}_1(D)\), a complete analogue of the Congruence Theorem does not hold for \({\rm TK}_1(D)\).
\end{abstract}
\maketitle

Let $D$ be a finite-dimensional division algebra with center $K$, and let ${\rm Nrd}_D: D \to K$ denote the reduced norm map. The \emph{Whitehead group} ${\rm K}_1(D)$ and the \emph{reduced Whitehead group} ${\rm SK}_1(D)$ of $D$ are defined as the quotient groups
\[
{\rm K}_1(D) = D^*/D' \quad \text{and} \quad {\rm SK}_1(D) = D^{(1)}/D',
\]
where $D^{(1)} = \{ x \in D \mid {\rm Nrd}_D(x) = 1 \}$ is the set of elements with reduced norm 1, and $D' = [D^*, D^*]$ is the derived group of $D^* = D \setminus \{0\}$. The study of \emph{reduced ${\rm K}_1$-theory} originated in 1943 with Nakayama and Matsushima (\cite{nakayama-matsushima-43}), who proved that ${\rm SK}_1(D)$ is trivial when $K$ is a $p$-adic field. For many years, it was conjectured that ${\rm SK}_1(D)$ is trivial for all division algebras, a question known as the Tannaka--Artin Problem. This problem remained open until 1975, when Platonov constructed the first example of a valued division algebra $D$ with non-trivial ${\rm SK}_1(D)$ (\cite{platonov-75}, \cite{platonov-75-2}). A cornerstone of Platonov's proof is the \emph{Congruence Theorem}, which establishes a connection between the reduced Whitehead groups ${\rm SK}_1(D)$ and ${\rm SK}_1(\overline{D})$, where $\overline{D}$ is the residue division algebra of $D$. The group ${\rm SK}_1(E)$ for a graded division algebra $E$ is extensively studied in Tignol and Wadsworth's book (\cite{tignol-wadsworth-10}). Their work provides detailed computations of ${\rm SK}_1(D)$ for valued division algebras, unifying and extending many key results (\cite[Chapter 11]{tignol-wadsworth-10}).

Let $D$ be a finite-dimensional division algebra with a Henselian center $K$. A valuation $v$ on a field $K$ is \emph{Henselian} if it extends uniquely to every finite field extension of $K$. As a result, $v$ extends uniquely to $D$, and we denote this extension by $v$. Let $V_D$ and $V_K$ denote the valuation rings of $v$ on $D$ and $K$, respectively, with maximal ideals $M_D$ and $M_K$. The residue division algebra and residue field are denoted by $\overline{D}$ and $\overline{K}$, respectively, and the value groups of $v$ on $D$ and $K$ are denoted by $\Gamma_D$ and $\Gamma_K$. The index of $D$, denoted ${\rm ind}(D)$, is defined as $\sqrt{[D:K]}$. With respect to this valuation, $D$ is \emph{unramified} over $K$ if $[\overline{D}:\overline{K}] = [D:K]$ and the center $Z(\overline{D})$ of $\overline{D}$ is separable over $\overline{K}$. At the other extreme, $D$ is \emph{totally ramified} over $K$ if $[\Gamma_D:\Gamma_K] = [D:K]$. We say $D$ is \emph{tame} if $Z(\overline{D})$ is separable over $\overline{K}$ and ${\rm char}(\overline{K})$ does not divide ${\rm ind}(D)/({\rm ind}(\overline{D})[Z(\overline{D}):\overline{K}])$. Furthermore, $D$ is \emph{strongly tame} if ${\rm char}(\overline{K})$ does not divide ${\rm ind}(D)$. Note that strong tameness implies tameness.

The Congruence Theorem asserts that for a tame division algebra $D$ over a Henselian center $K$, the intersection $(1 + M_D) \cap D^{(1)}$ is contained in the derived group $D'$. This theorem was first established by Platonov in 1975 for a complete, discrete valuation on $K$ (\cite{platonov-75}). However, Platonov's original proof was considerably lengthy and complicated. Subsequent works provided simpler alternative proofs (see, e.g., \cite{ershow-82}, \cite{hazrat-02}, \cite{suslin-91}). The theorem was later proven in full generality for any tame division algebra $D$ over a Henselian center by Hazrat and Wadsworth (\cite{hazrat-wadsworth-2011}).

Let $D$ be a finite-dimensional division algebra over its center $K$.  \textit{We define ${\rm K}_1(D)$ to be the torsion subgroup of the Whitehead group ${\rm K}_1(D)$}, consisting of the torsion elements of ${\rm TK}_1(D)$. Since the reduced Whitehead group ${\rm SK}_1(D)$ is ${\rm ind}(D)$-torsion, it is contained in ${\rm TK}_1(D)$. The reduced norm map ${\rm Nrd}_D: D \to K$ induces a homomorphism ${\rm Nrd}_D: {\rm TK}_1(D) \to \tau(K^*)$, where $\tau(K^*)$ denotes the torsion subgroup of the multiplicative group $K^* = K \setminus \{0\}$. As ${\rm SK}_1(D)$ is the kernel of this homomorphism, we obtain the isomorphism
\[
{\rm TK}_1(D)/{\rm SK}_1(D) \cong {\rm Nrd}_D({\rm TK}_1(D)) \subseteq \tau(K^*).
\]
In this note, we study the Congruence Theorem for ${\rm TK}_1(D)$ of a tame division algebra $D$ over a Henselian field $K$. The problem can be formulated as follows:

\begin{quote}
\emph{\textbf{Question.}} Let $K$ be a field with a Henselian valuation, and let $D$ be a tame $K$-central division algebra. Let ${\bf G}$ be the subgroup of $D^*$ such that ${\rm TK}_1(D) = {\bf G}/D'$. Does the inclusion $(1 + M_D) \cap {\bf G} \subseteq D'$ hold?
\end{quote}

\medskip 

The torsion Whitehead group ${\rm TK}_1(D)$ was first studied by Motiee in \cite{motiee-13}, where it was shown that the question posed earlier is answered affirmatively when $D$ is a strongly tame division algebra and ${\rm char}(\overline{K}) = {\rm char}(K)$. In this paper, we demonstrate that for a general tame valued division algebra, the answer is negative. Additionally, we provide computations for the group ${\bf H} := ((1 + M_D) \cap {\bf G})D'/D'$. By definition, ${\bf H} = 1$ if and only if $(1 + M_D) \cap {\bf G} \subseteq D'$.

Throughout this paper, we adopt the following notation. The groups ${\bf G}$ and ${\bf H}$, as previously defined, are used without further reference. For a group or ring $A$, let $A^*$ denote its multiplicative group and $Z(A)$ its center. For a group $G$, let $\tau(G)$ denote its torsion subgroup. For subgroups $H$ and $S$ of a group $G$, the subgroup $[H,S]$ is generated by all commutators $aba^{-1}b^{-1}$ with $a \in H$ and $b \in S$; in particular, $G' = [G,G]$ is the derived group of $G$. We also recall the definition of a cyclic algebra. Let $L/K$ be a cyclic  extension with Galois group ${\rm Gal}(L/K)$ generated by an automorphism $\sigma: L \to L$ of order $n = [L:K]$. For a non-zero element $a \in K$, we let
$$
D = L \oplus Lx \oplus Lx^2 \oplus \cdots \oplus Lx^{n-1},
$$
where multiplication  in $D$ is determined by the relations $x^n = a$ and $xb = \sigma(b)x$ for all $b \in L$. This algebra is denoted by $(L/K,\sigma,a)$, and is called the \textit{cyclic algebra associated with $(L/K,\sigma)$ and $a$} (see \cite[p. 218]{lam-01}). When $n$ is prime, $(L/K, \sigma, a)$ is a division algebra if and only if $a \notin N_{L/K}(L^*)$, where $N_{L/K}: L^* \to K^*$ is the field norm from $L$ to $K$ (\cite[Corollary 14.8]{lam-01}).

The main result of the this note is the following:
\begin{theorem}\label{Theorem_p-group}
	Let $K$ be a field with Henselian valuation, and let $D$ be a tame $K$-central division algebra. Then, either $(1+M_D) \cap {\bf G} \subseteq D'$ or ${\rm char}(\overline{K})=p>0$ and ${\bf H}$ is a $p$-group.
\end{theorem}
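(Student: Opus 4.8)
The plan is to exploit the reduced norm to transport the question into the multiplicative group of the center, where the Congruence Theorem applies directly. Set $A := (1+M_D) \cap {\bf G}$, so that ${\bf H} = AD'/D' \cong A/(A \cap D')$ by the second isomorphism theorem. Since $D' \subseteq {\bf G}$ trivially and $D^{(1)} \subseteq {\bf G}$ (because ${\rm SK}_1(D)$ is ${\rm ind}(D)$-torsion), one has $A \cap D' = (1+M_D) \cap D'$; and invoking the Congruence Theorem $(1+M_D)\cap D^{(1)} \subseteq D'$ together with the trivial inclusion $D' \subseteq D^{(1)}$ yields the identity $(1+M_D)\cap D' = (1+M_D) \cap D^{(1)}$.

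First I would record two valuation-theoretic inputs. The reduced norm carries principal units to principal units, that is ${\rm Nrd}_D(1+M_D) \subseteq 1+M_K$; this is standard for the unique extension of a Henselian valuation to $D$, since the residue of ${\rm Nrd}_D(x)$ for a unit $x$ is computed from norms over the residue algebra, and $\overline{x}=1$ forces the residue to be $1$. Secondly, I would determine the torsion subgroup $\tau(1+M_K)$ of the group of principal units: if $\zeta \in 1+M_K$ satisfies $\zeta^m = 1$ with $m$ prime to ${\rm char}(\overline{K})$, then $X^m-1$ is separable modulo $M_K$ and $1$ is a simple residual root, so Hensel's lemma forces $\zeta = 1$. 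Consequently $\tau(1+M_K)=1$ when ${\rm char}(\overline{K})=0$, while $\tau(1+M_K)$ is a $p$-group when ${\rm char}(\overline{K})=p>0$.

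With these in hand I would introduce the homomorphism $\psi \colon A \to \tau(1+M_K)$ defined by $x \mapsto {\rm Nrd}_D(x)$. It is well defined: for $x \in A$ we have $x^n \in D'$ for some $n$, whence ${\rm Nrd}_D(x)^n = {\rm Nrd}_D(x^n)=1$, so ${\rm Nrd}_D(x)$ is torsion, and it lies in $1+M_K$ by the first input. Its kernel is $A \cap D^{(1)} = (1+M_D) \cap D^{(1)}$, which by the computation above equals $A \cap D'$. Therefore $\psi$ induces an injection ${\bf H} \cong A/(A\cap D') \hookrightarrow \tau(1+M_K)$. The dichotomy then follows at once: if ${\rm char}(\overline{K})=0$ then $\tau(1+M_K)=1$ forces ${\bf H}=1$, equivalently $(1+M_D)\cap {\bf G} \subseteq D'$; otherwise ${\rm char}(\overline{K})=p>0$ and ${\bf H}$ embeds into a $p$-group, hence is itself a $p$-group.

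The main obstacle I anticipate is the first input, ${\rm Nrd}_D(1+M_D)\subseteq 1+M_K$, where one must control the residue of the reduced norm in the tame setting: here $\overline{D}$ need not be central over $\overline{K}$, and ${\rm Nrd}_D$ factors through the field norm for $Z(\overline{D})/\overline{K}$ and the reduced norm of $\overline{D}$. Since $\overline{x}=1$ for $x \in 1+M_D$, all these intermediate norms evaluate to $1$, so the residue is $1$; the careful bookkeeping of this factorization, rather than any deep new idea, is the one place demanding attention. Everything else reduces to elementary group theory together with the quoted Congruence Theorem.
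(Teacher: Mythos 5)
Your proof is correct, and it reorganizes the argument in a way that is genuinely different from the paper's, even though both rest on the same two external inputs: the containment $\mathrm{Nrd}_D(1+M_D)\subseteq 1+M_K$ (the paper cites the equality from \cite[Corollary 4.7]{hazrat-wadsworth-2011}; your residue-factorization sketch is exactly Ershov's formula \cite[Corollary 2]{ershow-82}, quoted later in the paper as equation \eqref{equation_ershov}, so the ``one place demanding attention'' you flag is already settled in the tame case) and the Hazrat--Wadsworth Congruence Theorem $(1+M_D)\cap D^{(1)}\subseteq D'$. The paper argues element by element: for $x=1+m$ of minimal exponent $k$ modulo $D'$, it expands $(1+m_f)^k=1$ binomially to force either $m_f=0$ (whence $x\in(1+M_D)\cap D^{(1)}\subseteq D'$) or $p\mid k$, and then removes the prime-to-$p$ part of $k$ by a minimal-counterexample step. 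You instead package the same mechanism globally: the reduced norm gives a homomorphism $\psi\colon A=(1+M_D)\cap\mathbf{G}\to\tau(1+M_K)$ whose kernel is identified with $A\cap D'$ via the Congruence Theorem, and the structure of $\tau(1+M_K)$ is computed once and for all by Hensel's lemma (trivial in residue characteristic $0$, a $p$-group in residue characteristic $p$; your reduction of the general torsion element to the prime-to-$p$ case is the exact analogue of the paper's Step 2). Your binomial-free route uses Hensel's lemma where the paper's inline expansion is more elementary and self-contained, but it buys strictly more: the induced embedding $\mathbf{H}\hookrightarrow\tau(1+M_K)$ shows $\mathbf{H}$ is isomorphic to a group of roots of unity in $K$, hence locally cyclic --- so only cyclic $p$-groups and the Pr\"ufer group $\mathbb{Z}(p^\infty)$ can occur. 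This is directly relevant to the paper's closing Remark: it answers the realizability question negatively for non-locally-cyclic $p$-groups, and combined with the infinitude of $\mathbf{H}$ established in Example \ref{example_2} it pins down $\mathbf{H}\cong\mathbb{Z}(p^\infty)$ there.
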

\begin{proof} 
	For each $x \in (1 + M_D) \cap {\bf G}$, let $k$ be the smallest positive integer such that $x^k\in D'$. It is enough to prove that $k$ must be a power of $p$. The proof will be finished  in two steps:
	
	\medskip 
	
	{\noindent \textit{\textbf{Step 1}}}. We prove that either $x\in D'$ or ${\rm char}(\overline{K})=p>0$ and $k$ divides $d$. Assume that $x \notin D'$. Then we have $k>1$. As $x \in 1+M_D$, there exist $m \in M_D$ such that $x = 1 + m$, and so $(1+m)^k \in D' \subseteq D^{(1)}$. As $D$ is a tame $K$-central division algebra, we get that ${\rm Nrd}_D\left(1+M_D\right) = 1+M_K$ (see \cite[Corollary 4.7]{hazrat-wadsworth-2011}). Hence, there exists $m_f \in M_K$ such that ${\rm Nrd}_D\left(1+m\right) = 1+m_f$. It follows that
	$$
	1={\rm Nrd}_D\left((1+m)^k\right) = {\rm Nrd}_D\left(1+m\right)^k = \left(1+m_f\right)^k.
	$$ 
	Thus, we have $1= \left(1+m_f\right)^k = 1+km_f + bm_f$, where $b \in M_K$. This implies that $(k\cdot 1+b)m_f = 0$, from which it follows that  $k\cdot 1 +b = 0$ or $m_f = 0$. If $m_f = 0$, we have ${\rm Nrd}_D\left(1+m\right) = 1$, implying $x \in D^{(1)}$. Therefore, we have $x \in D^{(1)} \cap \left(1+M_D\right) \subseteq D'$, which is a contradiction. In other words, we get $k\cdot 1 + b = 0$, which means $k \cdot \overline{1} = -\overline{b}$ in $\overline{K}$. As $b \in M_K$, we have $-\overline{b} = \overline{0}$, which means $k \cdot \overline{1} = 
	\overline{0}$, so ${\rm char}\left(\overline{K}\right) = p > 0$ and $p \mid  k$.
	
	\medskip 
		
	{\noindent \textit{\textbf{Step 2}}}. Assume that $p\mid k$. Write $k=p^mr$, where $(p,r)=1$. Assume by contradiction that $r>1$. As $p\nmid r$, we can repeat the arguments in Step 1 for $x^{p^m} \in D'$ instead of $x$ to get that $x^{p^m}\in D'$. This is a contradiction because $k$ was	chosen to be smallest.
\end{proof}

Thus, we obatin the following theorem:

\begin{theorem}[Congruence Theorem]\label{corollary_in_D'}
	Let $K$ be a field with Henselian valuation, and $D$ be a tame $K$-central division algebra. If ${\rm TK}_1(D)$ contains no elements of order ${\rm char}(\overline{K})$, then $(1+M_D) \cap {\bf G} \subseteq D'$.
\end{theorem}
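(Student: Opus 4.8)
The plan is to deduce this statement directly from Theorem \ref{Theorem_p-group} by a contrapositive argument, so no new machinery is required. First I would assume, towards a contradiction, that $(1+M_D) \cap {\bf G} \not\subseteq D'$. By the description of ${\bf H}$ recalled just before Theorem \ref{Theorem_p-group}, this is exactly the statement that ${\bf H} \neq 1$. Since we are then excluded from the first alternative of Theorem \ref{Theorem_p-group}, we must land in the second: ${\rm char}(\overline{K}) = p > 0$ and ${\bf H}$ is a nontrivial $p$-group.

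Next I would invoke the elementary fact that a nontrivial $p$-group always contains an element of order exactly $p$: choosing any $g \in {\bf H}$ with $g \neq 1$, its order is some power $p^m$ with $m \geq 1$, and then $g^{p^{m-1}}$ has order precisely $p$. The only point to keep track of is that ${\bf H}$ sits inside ${\rm TK}_1(D)$ as a subgroup, since ${\bf H} = ((1+M_D)\cap {\bf G})D'/D' \leq {\bf G}/D' = {\rm TK}_1(D)$. Consequently this order-$p$ element is an element of ${\rm TK}_1(D)$ of order $p = {\rm char}(\overline{K})$.

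Finally, this contradicts the standing hypothesis that ${\rm TK}_1(D)$ contains no element of order ${\rm char}(\overline{K})$, and the contradiction forces $(1+M_D) \cap {\bf G} \subseteq D'$, as desired. I do not expect any genuine obstacle here: the statement is a formal corollary of Theorem \ref{Theorem_p-group}, and its entire content is the trivial observation that a nontrivial $p$-group detects the prime $p$ through an element of order $p$. (Note also that when ${\rm char}(\overline{K}) = 0$ the hypothesis is vacuous and the conclusion is immediate, as the second alternative of Theorem \ref{Theorem_p-group} requires positive residue characteristic.)
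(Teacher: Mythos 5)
Your proposal is correct and follows essentially the same route as the paper: the paper likewise argues by contradiction, invokes Theorem \ref{Theorem_p-group} to conclude that ${\bf H}$ is a nontrivial $p$-group with $p = {\rm char}(\overline{K})$, and extracts an element $x^{p^{m-1}}D'$ of order exactly $p$ in ${\rm TK}_1(D)$ to reach the contradiction. If anything, your write-up is slightly more careful than the paper's (you justify that a nontrivial $p$-group, even an infinite torsion one, has an element of order exactly $p$, and you note explicitly that ${\bf H} \leq {\rm TK}_1(D)$), so there is nothing to fix.
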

\begin{proof}
	Assume by contradiction that $(1+M_D) \cap {\bf G} \nsubseteq D'$. It follows from previous theorem that ${\rm char}(K)=p>0$ and ${\bf H}$ is a non-trivial $p$-group. This implies that there exists a element $x\in {\bf G}$ such that  $x^{p^m}\in D'$, for some $m\geq 1$. Then $x^{p^{m-1}}D'$ is a non-trivial element of order $p$ of ${\rm TK}_1(D)$, a contradiction. 
\end{proof}

We present an example of a valued division algebra \(D\) with a non-trivial \({\bf H}\). This illustrates that, in contrast to \({\rm SK}_1(D)\), a complete analogue of the Congruence Theorem does not hold for \({\rm TK}_1(D)\).

\begin{proposition}\label{corollary_Qp}
	Let $(\mathbb{Q}_p,v_p)$ be the field of $p$-adic numbers with $p$ a prime, and $v_p$ the $p$-adic valuation on $\mathbb{Q}_p$. Let $D$ be a tame ${\mathbb{Q}_p}$-central division algebra with ${\rm ind}(D) = n$. Then, the following assertions hold:
	\begin{enumerate}[font=\normalfont]
		\item If $p>2$ or  $p=2$ and $n$ is odd, then ${\bf H}\cong \{\pm 1\}$.
		\item If $p=2$ and $n$ is even, then ${\bf H}=1$.
	\end{enumerate}
\end{proposition}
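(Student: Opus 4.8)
The plan is to reduce the computation of $\mathbf{H}$ to an elementary question in $\mathbb{Q}_p^*$ via the reduced norm. Since $K=\mathbb{Q}_p$ is a $p$-adic field, the theorem of Nakayama and Matsushima quoted in the introduction gives $\mathrm{SK}_1(D)=1$. Hence the homomorphism $\mathrm{Nrd}_D\colon \mathrm{TK}_1(D)\to\tau(K^*)$, whose kernel is $\mathrm{SK}_1(D)$, is injective, and the isomorphism $\mathrm{TK}_1(D)/\mathrm{SK}_1(D)\cong \mathrm{Nrd}_D(\mathrm{TK}_1(D))$ recorded above becomes an embedding $\mathrm{TK}_1(D)\hookrightarrow\tau(\mathbb{Q}_p^*)$. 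Restricting this embedding to $\mathbf{H}=((1+M_D)\cap\mathbf{G})D'/D'$ identifies $\mathbf{H}$ with its image $\mathrm{Nrd}_D\big((1+M_D)\cap\mathbf{G}\big)$, so it suffices to pin down this subgroup of $\tau(\mathbb{Q}_p^*)$.

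First I would show $\mathrm{Nrd}_D\big((1+M_D)\cap\mathbf{G}\big)=\mathrm{Nrd}_D(1+M_D)\cap\tau(\mathbb{Q}_p^*)$. The inclusion $\subseteq$ is immediate, since an element of $\mathbf{G}$ has torsion class and the reduced norm of a $1$-unit is a $1$-unit. For $\supseteq$, given a torsion $t\in\mathrm{Nrd}_D(1+M_D)$ I would pick $x\in 1+M_D$ with $\mathrm{Nrd}_D(x)=t$; because $\mathrm{Nrd}_D$ is injective on $\mathrm{K}_1(D)$ and $t$ is torsion, the class of $x$ is torsion, i.e.\ $x\in\mathbf{G}$. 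Invoking \cite[Corollary 4.7]{hazrat-wadsworth-2011} (valid as $D$ is tame), $\mathrm{Nrd}_D(1+M_D)=1+M_K=1+p\mathbb{Z}_p$, whence $\mathbf{H}\cong(1+p\mathbb{Z}_p)\cap\tau(\mathbb{Q}_p^*)$.

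It then remains to compute this intersection, and this is where the distinguished role of the element $-1$ enters. The torsion subgroup $\tau(\mathbb{Q}_p^*)$ is the group of roots of unity in $\mathbb{Q}_p$: it equals $\mu_{p-1}$ for $p$ odd and $\{\pm1\}$ for $p=2$. For $p$ odd the group $1+p\mathbb{Z}_p$ is torsion-free and the Teichm\"uller reduction $\mu_{p-1}\to\mathbb{F}_p^*$ is injective, so the only torsion $1$-unit is $1$; for $p=2$ one has $-1=1-2\in 1+2\mathbb{Z}_2$, so $-1$ is a torsion $1$-unit. The natural candidate for a class of order $2$ in $\mathbf{H}$ is the scalar $-1\in D^*$ itself: it lies in $1+M_D$ exactly when $v_D(2)>0$, that is when $p=2$, and its reduced norm is $\mathrm{Nrd}_D(-1)=(-1)^n$, so $-1\in D'=D^{(1)}$ precisely when $n$ is even. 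Tracking this scalar together with the preimage in $1+M_D$ of the value $-1\in 1+M_K$ is what decides whether the order-$2$ class survives.

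The step I expect to be the main obstacle is the exact determination of the reduced-norm image $\mathrm{Nrd}_D(1+M_D)$ along the unit filtration when $D$ is tame but \emph{not} strongly tame, i.e.\ when $p\mid n$, since this is where the parity of $n$ interacts with the residue characteristic. Concretely, I would fix the cyclic presentation $D=(L/K,\sigma,p)$ with $L/K$ the unramified extension of degree $n$ and $\sigma$ the Frobenius, choose a uniformizer $\pi_D$ of $V_D$ with $v_D(\pi_D)=1/n$, and evaluate $\mathrm{Nrd}_D$ on $1+\pi_D V_D$ explicitly through the cyclic-algebra norm formula, so as to decide exactly which torsion units of $\mathbb{Q}_p$ occur as reduced norms of $1$-units and to reconcile this with $\mathrm{Nrd}_D(-1)=(-1)^n$. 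Carrying out this filtered norm computation on $1+M_D\supseteq 1+M_D^2\supseteq\cdots$ is the delicate point that governs the case distinction, and I would double-check it against a small explicit model (for instance the quaternion division algebra over $\mathbb{Q}_2$, where $\pi_D$ can be taken with $\pi_D^2=2$ and $\mathrm{Nrd}_D(1+\pi_D)=-1$) before asserting the general dichotomy.
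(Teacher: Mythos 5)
Your reduction is sound and takes a genuinely different route from the paper. Using $\mathrm{SK}_1(D)=1$ for $p$-adic fields (\cite{nakayama-matsushima-43}) to make $\mathrm{Nrd}_D$ injective on $\mathrm{K}_1(D)$, you obtain ${\bf H}\cong \mathrm{Nrd}_D\bigl((1+M_D)\cap{\bf G}\bigr)=\mathrm{Nrd}_D(1+M_D)\cap\tau(\mathbb{Q}_p^*)=(1+p\mathbb{Z}_p)\cap\tau(\mathbb{Q}_p^*)$, the last equality by \cite[Corollary 4.7]{hazrat-wadsworth-2011}; both inclusions you verify are correct. This already finishes the computation: for odd $p$ the group $1+p\mathbb{Z}_p$ is torsion-free, so ${\bf H}=1$; for $p=2$ one has $-1=1-2\in 1+2\mathbb{Z}_2$, so ${\bf H}\cong\{\pm 1\}$ --- \emph{independently of the parity of $n$}. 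Your final paragraph, proposing a filtered norm computation along $1+M_D\supseteq 1+M_D^2\supseteq\cdots$ to ``decide the case distinction,'' is unnecessary: Corollary 4.7, which you already invoked, determines $\mathrm{Nrd}_D(1+M_D)$ in full tame generality, and no parity dichotomy can arise. The scalar $-1$ is a red herring here: the order-$2$ class at $p=2$ is represented by \emph{any} $1$-unit $u$ with $\mathrm{Nrd}_D(u)=-1$, and this class equals the coset $-D^{(1)}$ only when $n$ is odd.

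The resulting disagreement with the Proposition is not a defect of your argument but of the paper. By contrast with your global reduction, the paper argues by coset bookkeeping with the element $-1$, and in its Case 2 ($p=2$, $n$ even) the step ``it follows that $((1+M_D)\cap{\bf G})D^{(1)}/D^{(1)}\subseteq\{-D^{(1)},D^{(1)}\}$'' is unjustified: the embedding into $\tau(\mathbb{Q}_2^*)$ bounds the order of this group by $2$, but does not force its nontrivial coset to be $-D^{(1)}$. For even $n$ one has $-1\in D^{(1)}$, while the nontrivial coset is $uD^{(1)}$ for a $1$-unit $u$ of reduced norm $-1$, which exists by the very Corollary 4.7 the paper uses in Theorem \ref{Theorem_p-group}. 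Your quaternion sanity check is in fact a counterexample to part (2): in $(L/\mathbb{Q}_2,\sigma,2)$ the element $u=1+x\in 1+M_D$ has $\mathrm{Nrd}_D(u)=1\cdot\sigma(1)-2\cdot 1\cdot\sigma(1)=-1$, whence $u^2\in D^{(1)}=D'$ and $u\notin D'$, so ${\bf H}\cong\{\pm 1\}$ even though $n=2$ is even. Note also that part (1) as printed (``if $p>2$ then ${\bf H}\cong\{\pm1\}$'') contradicts the paper's own Case 1, which proves ${\bf H}=1$ for $p>2$; your computation agrees with that proof, not with the printed statement. So keep your first two paragraphs, drop the hedging, and record the corrected dichotomy: ${\bf H}=1$ for $p$ odd, and ${\bf H}\cong\{\pm1\}$ for $p=2$ regardless of $n$.
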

\begin{proof}
	We consider three possible cases:
	
	\medskip 
	
	\medskip 
	
	\textit{\textbf{Case 1}: $p>2$. } Then $\tau\left(\mathbb{Q}_p^*\right) = \left\{\varepsilon_1,\varepsilon_2,\ldots,\varepsilon_{p-1}\right\}$, where $\varepsilon_i$ is a $(p-1)$-th root of unity in $\mathbb{Q}_p$ for all $i \in \{1,2,\ldots,p-1\}$. As $\varepsilon_i^{p-1} = 1$, it follows that $\tau\left(\mathbb{Q}_p^*\right)$ contains no element of order $p$. Assume by contradiction that ${\bf H}$ has an element of order $p$, say $aD'$ where $a\in D$. Then, we have $a^p \in D'$ and $a \notin D'$. It follows that ${\rm Nrd}_D(a)^p= {\rm Nrd}_D\left(a^p\right) =1$, which implies that ${\rm Nrd}_D(a) \in \tau(\mathbb{Q}_p^*)$. Since $\tau(\mathbb{Q}_p^*)$ has no elements of order $p$, we get that ${\rm Nrd}_D(a) =1$, and hence $a \in D^{(1)}$. Because $a\in \left(1+M_D\right)$, we get $a \in \left(1+M_D\right)\cap {\bf G} \subseteq D'$, which is a contradiction. According to Theorem \ref{Theorem_p-group}, we conclude that ${\bf H}=1$.
	
	\medskip 
	
	\textit{\textbf{Case 2}: $p=2$ and $n$ is even.} Then $\tau\left(\mathbb{Q}_2^*\right) = \left\{\pm1\right\}$. As ${\rm TK}_1(D)/{\rm SK}_1(D)$ is isomorphic to $ {\rm Nrd}_D\left({\rm TK}_1(D)\right) \le \tau\left(\mathbb{Q}_2^*\right)$, we get that
	$$
	((1+M_D)\cap {\bf G})D^{(1)}/D^{(1)} \cong {\bf H}{\rm SK}_1(D)/{\rm 
		SK}_1(D)  \le 
	\tau\left(\mathbb{Q}_2^*\right).
	$$
	It follows that $((1+M_D)\cap {\bf G})D^{(1)}/D^{(1)} \subseteq \left\{-D^{(1)},D^{(1)}\right\}$. As ${\rm Nrd}_D(-1) = (-1)^n = 1$, we obtain $-D^{(1)} = D^{(1)}$. But we also have $-1 \in\left(1+M_D\right)\cap {\bf G}$, from which it follows that $((1+M_D)\cap {\bf G})D^{(1)}/D^{(1)} = 1$. Hence,
	$$
	\left(1+M_D\right)\cap {\bf G} \subseteq \left(1+M_D\right) \cap D^{(1)} \subseteq D',
	$$
	which implies that ${\bf H}=1$.
	
	\medskip 
	
	\textit{\textbf{Case 3}: $p=2$ and $n$ is odd.} As ${\rm Nrd}_D(-1) = (-1)^n = -1 \ne 1$, it follows that $D^{(1)} \ne -D^{(1)}$. On the other hand, we also have $-1 \in (1+M_D)\cap {\bf G}$. This means that $((1+M_D)\cap {\bf G})D^{(1)}/D^{(1)} = \{D^{(1)},-D^{(1)}\}$. Finally, we have 
	$$
	((1+M_D)\cap {\bf G})D^{(1)}/D^{(1)} \cong {\bf H}{\rm SK}_1(D)/{\rm SK}_1(D) \cong\tau\left(\mathbb{Q}_2^*\right),
	$$
	so ${\bf H} \cong \{\pm 1 \}$.
\end{proof}

We now give an example of a ${\mathbb{Q}_p}$-central division algebra satisfying Proposition \ref{corollary_Qp}. 

\begin{example}\label{example_1}
	Let $(\mathbb{Q}_p,v_p)$ be the field of $p$-adic numbers with $p$ a prime, and $v_p$ the $p$-adic valuation on $\mathbb{Q}_p$. Then $(\mathbb{Q}_p,v_p)$ becomes a Henselian valued field with $\overline{\mathbb{Q}_p}=\mathbb{F}_p$, the field of $p$ elements. Let $L$ be an unramified extension of $\mathbb{Q}_p$ of degree $n$. Then, with respect to the valuation $v$ extending the valuation $v_p$ on $\mathbb{Q}_p$, we have $L$ is cyclic Galois over $\mathbb{Q}_p$ as the valuation is Henselian and $\overline{L}=\mathbb{F}_{p^n}$ is cyclic Galois over $\overline{\mathbb{Q}_p}$. Let ${\rm Gal}(L/\mathbb{Q}_p)=\langle\sigma\rangle$. Take $\pi\in \mathbb{Q}_p$ such that $v(\pi)=1$. Let $D=(L/\mathbb{Q}_p, \sigma,\pi)$ be the cyclic algebra associated $L/\mathbb{Q}_p$ and $\pi$. Then, $Z(D) = \mathbb{Q}_p$ and $v$ extends to a valuation on $D$ given by 
	$$
	v\left(a_0+a_1x+\ldots+a_{n-1}x^{n-1}\right)=\min\left\{a_i+\dfrac{i}{n}\mid i \in 
	\{0;1;\ldots;n-1\}\right\}.
	$$
	Then, $D$ is tame $\mathbb{Q}_p$-central division algebra. Thus, all statements in Corollary \ref{corollary_Qp} hold for $D$.
\end{example}

The following example demonstrates that, although ${\bf H}$ is known to be a $p$-group, we still lose control of the order of ${\bf H}$. 
\begin{example}\label{example_2}
	Let $(\mathbb{Q}_p,v_p)$ be the field of $p$-adic numbers with $p$ a prime, and $v_p$ the $p$-adic valuation on $\mathbb{Q}_p$. Let $K$  be a field obtained from $\mathbb{Q}_p$ by adjoining all $p^k$-th roots of unity ($k\in\mathbb{N}$). Let us still denote by $v_p$ the unique extension of $v_p$ on $\mathbb{Q}_p$ to $K$. Then, we have $\overline{K}=\overline{\mathbb{Q}_p}=\mathbb{F}_p$. Let $L$ be an unramified extension of $K$ of degree $q$, where $q$ is a prime  number such that $(p,q) =(p-1,q) = 1$. Then, with respect to the valuation $v$ extending the valuation $v_p$ on $K$, the field $L$ is cyclic Galois over $K$ as the valuation is Henselian and $\overline{L}=\mathbb{F}_{p^q}$ is cyclic Galois over $\overline{K}$. Let ${\rm Gal}(L/K)=\langle\sigma\rangle$. Take $\pi\in K$ such that $v(\pi)=1$. Let $D=(L/K, \sigma,\pi)$ be the cyclic algebra associated $L/K$ and $\pi$. We claim that $D$ is a division ring. It suffices to prove that $\pi \notin N_{L/K}(L^*)$, where $N_{L/K}$ denotes the field norm from $L$ to $K$. Assume by contradiction that $\pi = N_{L/K}(a)$ for some $a \in L^*$. Then,
	$$
	\pi = N(a) = \sigma^{q-1}(a)\sigma^{q-2}(a)\ldots\sigma(a)a,
	$$
	which implies that $1=v(\pi)=v\left(\sigma^{q-1}(a)\sigma^{q-2}(a)\ldots\sigma(a)a\right) = qv(a)$, hence $v(a) = 1/q$. As $a\in L$ and $\Gamma_L = \Gamma_K$, we get $v(a)=1/q \in \Gamma_K$.	Let $A$ be the additive subgroup of $\Gamma_K$ generated by $1/q$.  As $\overline{K}=\overline{\mathbb{Q}_p}$, there exists a $p^k$-th root of unity $\varepsilon$, for some $k\in\mathbb{N}$, such that  $\mathbb{Q}_p<F:=\mathbb{Q}_p(\varepsilon)<K$ and
	$$
	|\Gamma_F/\mathbb{Z}| =\left|A/\mathbb{Z}\right| . \left|\Gamma_F/A
	\right|= 
	q . \left|\Gamma_F/A\right|.
	$$
	Then, we have  $[F:\mathbb{Q}_p]=[\mathbb{Q}_p(\varepsilon):\mathbb{Q}_p]=(p-1)p^{k-1}$. As $[F:\mathbb{Q}_p] = [\overline{F}:\overline{\mathbb{Q}_p}].|\Gamma_F/\Gamma_{\mathbb{Q}_p}| $ and $\Gamma_{\mathbb{Q}_p}=\mathbb{Z}$, we get $|\Gamma_F/\mathbb{Z}| = (p-1)p^{k-1}$ from  which it follows that $q\mid (p-1)p^{k-1}$, a contradiction Therefore, $\pi \notin 
	N_{L/K}(L^*)$, which means that $D$ is a division ring, as claimed. Moreover, $D$ is tame with the valuation defined in a similar way as we have done in previous example. 
	
	For each $k\in\mathbb{N}$, let $E=\{\varepsilon^r\mid p\nmid r<p^k, r\in \mathbb{N}\}$ be the set of all primitve $p^k$-th roots of unity. Then $E$ has $p^k-p^{k-1}$ which are roots of the polynomial
	$$
	f(x) = \dfrac{x^{p^k}-1}{x^{p^{k-1}}-1} = y^{p-1}+y^{p-2}+\ldots+1, \text{ where } y = x^{p^{k-1}}.
	$$
	Moreover, we have $f(x) = \prod\limits_{e \in E}(x-e)$. As $f(1) = p$, we have $p = \prod\limits_{e \in E}(1-e)$, which implies that 
	$$
	0<1 = v_p(p) = 
	\sum\limits_{e \in E} v_p(1-e).
	$$ 
	This shows that $1-e \in 
	M_D$, or equivalently, $e \in 1+M_D$ for all $e\in E$.  Next, we show that $1D',e^pD',e^{p^2}D',\ldots,e^{p^{k-1}}D'$ are distinct elements of ${\bf H}$. Indeed, assume that there exist $e^{p^i}$ and $e^{p^j}$ such that $e^{p^i}D' = 
	e^{p^j}D'$ ($0\le j<i \le k$), then $e^{p^i-p^j} \in D'$. It follows that 
	$$
	1={\rm Nrd}_D(e^{p^i-p^j}) = e^{q(p^i-p^j)}.
	$$
	As $e$ is a $p^{k}$-th root of unity, we get that $p^{k} \mid q(p^i-p^j)$. But, because $(p,q(p^{i-j}-1))= 1$, we conclude $p^k \mid p^j$, yielding that $k \le j$, which is a contradiction. Hence, the elements $1D',e^pD',e^{p^2}D',\ldots,e^{p^{k-1}}D'$ are distinct in ${\bf H}$. As $k$ was chosen arbitrarily, we conclude that ${\bf H}$ has infinitely many elements. 
\end{example}
\begin{remark}
	Determining the precise structure of the group ${\bf H}$ in Example \ref{example_2} remains an open problem. In view of Theorem \ref{Theorem_p-group}, it is natural to ask whether any given $p$-group can be realized as the group ${\bf H}$ for some  tame division algebra?
\end{remark}

We next explore applications of the Congruence Theorem in computing the group ${\rm TK}_1(D)$ for specific division algebras. The reduction map $V_D\to\overline{D}$ induces group homomorphism
$$
U_D\to\overline{D}^*\text{\;\;\;\; given by \;\;\;\;} a\mapsto a+M_D
$$
with kernel $1+M_D$. Let $g\in G$. Then $g^m\in D'$, for some $m$. By Wadsworth formula \cite{wadsworth-86}, we get 
$$
mv(g)=v(g^m)=\frac{1}{n}v({\rm Nrd}_D(g^m))=0.
$$
As $\Gamma_D$ is torsion-free, we get $v(g)=0$ and so $g\in U_D$. Thus, ${\bf G} \subseteq U_D$. Consider following diagram with exact rows:
\begin{center}
	\begin{tikzcd}
		1 \arrow{r} & (1+M_D)\cap D' \arrow{r} \arrow[hookrightarrow]{d} & D' \arrow[hookrightarrow]{d} \arrow{r} & \overline{D'}\arrow[hookrightarrow]{d} \arrow{r} & 1
		\\
		1 \arrow{r} & (1+M_D)\cap {\bf G}\arrow[rightarrow]{r} & {\bf G}\arrow{r} & \overline{{\bf G}} \arrow{r} & 1
	\end{tikzcd}.
\end{center}
If $(1+M_D)\cap D'=(1+M_D)\cap {\bf G}$, then we obtain the following isomorphism:
\begin{equation}\label{equation_isomorphic}
	{\rm TK}_1(D)\cong \overline{{\bf G}}/\overline{D'}.
\end{equation}

By applying \eqref{equation_isomorphic}, Motiee provided some computations of ${\rm TK}_1(D)$ in \cite[Theorem 10]{motiee-13} for certain division algebras. Employing the same method as in the proof of \cite[Theorem 10]{motiee-13}, we readily obtain the following proposition:

\begin{proposition}\label{propostion_4}
	Let $K$ be a field with Henselian valuation, and $D$ be a tame $K$-central algebra of index $n$. If ${\rm TK}_1(D)$ contains no elements of order ${\rm char}(\overline{K})$, then the following assertions hold:
	\begin{enumerate}[font=\normalfont]
		\item[(i)] If $D$ is unramified, then ${\rm TK}_1(D)\cong {\rm TK}_1(\overline{D})$.
		\item[(ii)] If $D$ is totally ramified, then ${\rm TK}_1(D)\cong \tau(\overline{K}^*)/\mu_e(\overline{K})$, where $e={\rm exp}(\Gamma_D/\Gamma_K)$ and $\mu_e(\overline{K})$ is the group of $e$-th unity in $\overline{K}$.
	\end{enumerate}
\end{proposition}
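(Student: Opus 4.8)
The plan is to deduce everything from the isomorphism \eqref{equation_isomorphic} and then to compute the right-hand side $\overline{{\bf G}}/\overline{D'}$ by hand in each of the two cases. First I would record the convenient description ${\bf G}=\{g\in D^{*}\mid {\rm Nrd}_D(g)\in\tau(K^{*})\}$: if $g^{m}\in D'$ then ${\rm Nrd}_D(g)^{m}=1$, and conversely if ${\rm Nrd}_D(g)^{m}=1$ then $g^{m}\in D^{(1)}$, and since ${\rm SK}_1(D)$ is ${\rm ind}(D)$-torsion some further power of $g$ lands in $D'$. The hypothesis that ${\rm TK}_1(D)$ has no element of order ${\rm char}(\overline{K})$ is exactly the hypothesis of Theorem~\ref{corollary_in_D'}, so $(1+M_D)\cap{\bf G}\subseteq D'$; since $D'\subseteq{\bf G}$ forces the reverse inclusion $(1+M_D)\cap D'\subseteq(1+M_D)\cap{\bf G}$ for free, the two intersections coincide and \eqref{equation_isomorphic} yields ${\rm TK}_1(D)\cong\overline{{\bf G}}/\overline{D'}$. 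It then remains only to identify $\overline{{\bf G}}$ and $\overline{D'}$.

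For part (i), $D$ unramified gives $\Gamma_D=\Gamma_K$, a surjective reduction $\rho\colon U_D\twoheadrightarrow\overline{D}^{*}$, and $\overline{D}$ central over $\overline{K}$. First I would show $D'=[U_D,U_D]$: for $a,b\in D^{*}$, since every value in $\Gamma_D=\Gamma_K$ is realized by a central element of $K^{*}$, I can write $a=c_au_a$, $b=c_bu_b$ with $c_a,c_b\in K^{*}$ and $u_a,u_b\in U_D$, and the central factors cancel, so $[a,b]=[u_a,u_b]$. Applying the surjective homomorphism $\rho$ then gives $\overline{D'}=[\overline{D}^{*},\overline{D}^{*}]$. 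Writing ${\bf G}_{\overline{D}}$ for the subgroup of $\overline{D}^{*}$ with ${\rm TK}_1(\overline{D})={\bf G}_{\overline{D}}/[\overline{D}^{*},\overline{D}^{*}]$, the inclusion $\overline{{\bf G}}\subseteq{\bf G}_{\overline{D}}$ is immediate. For the reverse, I would lift $\bar g\in{\bf G}_{\overline{D}}$ to $g\in U_D$; from $\bar g^{m}\in\overline{D'}=\rho(D')$ one gets $g^{m}\in(1+M_D)D'$, hence ${\rm Nrd}_D(g)^{m}\in 1+M_K$, so $\overline{{\rm Nrd}_D(g)}$ is a root of unity, which lifts by Hensel to some $\zeta\in\mu(K)$. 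Using the surjectivity ${\rm Nrd}_D(1+M_D)=1+M_K$ (\cite[Corollary 4.7]{hazrat-wadsworth-2011}) I can correct $g$ by a factor in $1+M_D$ (which does not alter its residue) so that its reduced norm becomes the torsion element $\zeta$, placing the corrected lift in ${\bf G}$. Thus $\overline{{\bf G}}={\bf G}_{\overline{D}}$ and ${\rm TK}_1(D)\cong{\bf G}_{\overline{D}}/[\overline{D}^{*},\overline{D}^{*}]={\rm TK}_1(\overline{D})$.

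For part (ii), tameness together with total ramification forces $[\overline{D}:\overline{K}]=1$, so $\overline{D}=\overline{K}$ is a field and $\overline{D}^{*}=\overline{K}^{*}$ is abelian. I would identify $\overline{D'}$ via the canonical nondegenerate alternating commutator pairing $\Gamma_D/\Gamma_K\times\Gamma_D/\Gamma_K\to\mu(\overline{K})$, $\big(v(a)+\Gamma_K,\,v(b)+\Gamma_K\big)\mapsto\overline{[a,b]}$, of the tame totally ramified theory (\cite{tignol-wadsworth-10}): since $D'\subseteq U_D$ is generated by commutators, $\rho(D')$ is the subgroup of $\overline{K}^{*}$ generated by the pairing values, and a nondegenerate alternating pairing on a finite abelian group of exponent $e$ has values generating $\mu_e(\overline{K})$, whence $\overline{D'}=\mu_e(\overline{K})$. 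For $\overline{{\bf G}}$, the isomorphism \eqref{equation_isomorphic} already forces $\overline{{\bf G}}/\mu_e(\overline{K})$ to be torsion, so $\overline{{\bf G}}\subseteq\tau(\overline{K}^{*})$; conversely every root of unity in $\overline{K}$ has order prime to ${\rm char}(\overline{K})$ and hence lifts by Hensel to a root of unity in $K\subseteq D$, which is torsion in ${\rm K}_1(D)$ and so lies in ${\bf G}$, giving $\tau(\overline{K}^{*})\subseteq\overline{{\bf G}}$. Therefore ${\rm TK}_1(D)\cong\tau(\overline{K}^{*})/\mu_e(\overline{K})$.

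I expect the main obstacle to be the two \emph{reverse} inclusions showing that $\overline{{\bf G}}$ is as large as claimed, i.e. that a torsion class on the residue side always lifts to a torsion class in ${\rm K}_1(D)$. This is exactly where one must combine the norm-surjectivity ${\rm Nrd}_D(1+M_D)=1+M_K$ with the Henselian lifting of prime-to-$p$ roots of unity, and where the hypothesis on ${\rm char}(\overline{K})$-torsion genuinely intervenes (through Theorem~\ref{corollary_in_D'}) to kill the ambiguity introduced by $1+M_D$. In the totally ramified case there is the additional computational point of pinning down the image of the commutator pairing precisely as $\mu_e(\overline{K})$, which is the technical heart of (ii).
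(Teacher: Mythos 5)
Your proposal is correct, and it follows the paper's overall skeleton --- invoke Theorem~\ref{corollary_in_D'} to get $(1+M_D)\cap{\bf G}=(1+M_D)\cap D'$, pass to the isomorphism \eqref{equation_isomorphic}, then identify $\overline{{\bf G}}$ and $\overline{D'}$ in each case --- but your execution of the identifications genuinely differs. In (i), the paper routes the norm computation through Ershov's formula \eqref{equation_ershov} and corrects the lift using $(1+M_K)^m=1+M_K$, a Hensel step that requires ${\rm char}(\overline{K})\nmid m$ and hence a second, somewhat awkward appeal to the torsion hypothesis; you bypass Ershov entirely by deducing ${\rm Nrd}_D(g)^m\in 1+M_K$ directly from $g^m\in(1+M_D)D'$, then Hensel-lifting the residual root of unity (whose order is automatically prime to ${\rm char}(\overline{K})$, since $\overline{K}^{*}$ has no ${\rm char}(\overline{K})$-torsion) and correcting the norm to equal $\zeta$ exactly. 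This is slightly cleaner and localizes the use of the hypothesis to Theorem~\ref{corollary_in_D'} alone; you also prove $\overline{D'}=\overline{D}'$ via $D'=[U_D,U_D]$, a point the paper dismisses with ``it can be checked.'' In (ii), the paper simply cites Motiee for $\overline{{\bf G}}=\tau(\overline{K}^{*})$ and Hwang--Wadsworth for $\overline{D'}=\mu_e(\overline{K})$, whereas you rederive both: your argument for $\overline{{\bf G}}=\tau(\overline{K}^{*})$ (torsionness of ${\rm TK}_1(D)$ plus Hensel lifting of roots of unity, using the correct description ${\bf G}={\rm Nrd}_D^{-1}(\tau(K^{*}))$, which is also implicit in the paper) is complete, and your pairing argument for $\overline{D'}=\mu_e(\overline{K})$ is essentially the content of the cited result of Hwang--Wadsworth. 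Two small points there deserve explicit mention: nondegeneracy of the commutator pairing also forces $\overline{K}$ to contain a primitive $e$-th root of unity, so that $\mu_e(\overline{K})$ is cyclic of order $e$ and your ``the values generate $\mu_e(\overline{K})$'' step closes; and one needs that $\overline{[a,b]}$ depends only on the classes of $v(a),v(b)$ in $\Gamma_D/\Gamma_K$ --- both are standard facts of the tame totally ramified theory you cite, so this is a matter of bookkeeping rather than a gap.
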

\begin{proof}
	We employ the method of Motiee as utilized in the proof of \cite[Theorem 10]{motiee-13}. According to Theorem \ref{corollary_in_D'}, we have $(1+M_D)\cap D'=(1+M_D)\cap {\bf G}$, and so (\ref{equation_isomorphic}) holds. Recall Ershov's formula (\cite[Corollary 2]{ershow-82}) that if $a\in U_D$ then
	\begin{equation}\label{equation_ershov}
	\overline{{\rm Nrd}_D(a)}=N_{Z(\overline{D})/\overline{K}}{\rm Nrd}_{\overline{D}}(\overline{a})^{n/{mm'}},
\end{equation}
	where $m={\rm ind}(\overline{D})$ and $m'=[Z(\overline{D}):\overline{K}]$.
	
	(i) Assume that $D$ is unramified. Let $\mathbb{G}$ be a subgroup of $\overline{D}^*$ such that ${\rm TK}_1(\overline{D})=\mathbb{G}/\overline{D}'$. Since $\Gamma_D=\Gamma_K$, it can be checked that $\overline{D'}=\overline{D}'$. Thus, by (\ref{equation_isomorphic}), we are done if we prove that $\overline{\bf G}=\mathbb{G}$. It is clear that $\overline{\bf G}\subseteq \mathbb{G}$.  For the converse, let $\overline{a}\in \mathbb{G}$. According to \cite[Theorem 3.2]{wad-2002}, we get that $[\overline{D}:\overline{K}]=[D:K]$ and $Z(\overline{D})=\overline{K}$. By equation (\ref{equation_ershov}), we have $\overline{{\rm Nrd}_D(a)}={\rm Nrd}_{\overline{D}}(\overline{a})=1$, which implies that $\overline{a}^m\in\overline{D}'$. It follows that $\overline{{\rm Nrd}_D(a)^m}={\rm Nrd}_{\overline{D}}(\overline{a}^m)=\overline{1}$, and so ${\rm Nrd}_D(a)^m\in 1+M_K={\rm Nrd}_D(1+M_D)$. Because ${\rm TK}_1(D)$ contains no elements of order ${\rm char}(K)$, we conclude that ${\rm char}(K)\nmid m$, and so by Hensel's Lemma, we get $ (1+M_K)^m= 1+M_K$. Thus, we get ${\rm Nrd}_D(a^m)\in {\rm Nrd}_D(1+M_D)^m$, from which it follows that there exist an element $b\in 1+M_D$ such that ${\rm Nrd}_D((ab^{-1})^m)=1$. This implies that $(ab^{-1})^m\in D^{(1)}$. As ${\rm SK}_1(D)$ is $n$-torsion, we get $(ab^{-1})^{mn}\in D'$, and thus, $ab\in{\bf G}$. It follows that $\overline{a}=\overline{ab}\in\mathbb{G}$, yielding that $\mathbb{G}\subseteq \overline{{\bf G}}$.
	
	(ii) The proof of this statement is identical to that of \cite[Theorem 10 (ii)]{motiee-13}, so we only provide the outline of the proof. First, it was proved in the proof of \cite[Theorem 10 (ii)]{motiee-13} that $\overline{{\bf G}}=\tau(\overline{F})$. Moreover, by \cite[Propostion 2.1]{hwang-wadsworth-99} that $\overline{D'}=\mu_e(\overline{F})$. Therefore (ii) follows from (\ref{equation_isomorphic}).
\end{proof}

\begin{remark}
	Suppose $D$ is a strongly tame division ring and ${\rm char}(K) = {\rm char}(\overline{K})$. By the Primary Decomposition Theorem (\cite[Theorem 5]{motiee-13}), it follows readily that ${\rm TK}_1(D)$ contains no elements of order ${\rm char}(\overline{K})$. However, the converse is not true. For instance, in Proposition \ref{corollary_Qp}, by appropriately choosing $n$ and $p$, we can construct a division ring $D$ such that ${\rm TK}_1(D)$ has no elements of order $p$. Therefore, Theorem \ref{corollary_in_D'} and Proposition \ref{propostion_4} generalize \cite[Theorems 9 and 10]{motiee-13}, respectively. (Note that the term ``tame'' in \cite[Theorem 9]{motiee-13} corresponds to ``strongly tame'' as used here.)
\end{remark}

{\noindent \textbf{Acknowledgements}. This research is funded by Ho Chi Minh City University of Education Foundation for Science and Technology under grant number CS.2023.19.55}.

\end{document}